\renewcommand{\PrintDOI}[1]{\doi{#1}}
\definecolor{xwhite}{RGB}{245,245,245}
\definecolor{xback}{RGB}{0,0,0}
\definecolor{xblue}{RGB}{0,0,139}
\definecolor{xgreen}{RGB}{50,205,50}
\definecolor{xcrimson}{RGB}{220,20,60}
\definecolor{xgold}{RGB}{255,215,0}
\definecolor{xmoccasin}{RGB}{255,228,181}
\newcommand{\vertexFiveStar}{%
  \begin{tikzpicture}[baseline=-0.5ex, scale=0.1]
    \draw (0,0) circle (1);
    \filldraw (0,0) circle (0.12);
    \foreach \angle in {90,162,234,306,378} {
      \draw (0,0) -- ({cos(\angle)}, {sin(\angle)});
      \filldraw ({cos(\angle)}, {sin(\angle)}) circle (0.08);
    }
  \end{tikzpicture}%
}
\tikzstyle{none}=[inner sep=0mm]
\tikzstyle{blacknode}=[fill=black, draw=black, shape=circle, minimum size=0.20cm, inner sep=0pt]
\tikzstyle{whitenode}=[fill={rgb,255: red,245; green,245; blue,245}, draw=black, shape=circle, minimum size=0.15cm, inner sep=1pt]
\tikzstyle{whitenode_v1}=[fill={rgb,255: red,245; green,245; blue,245}, draw=black, shape=circle, minimum size=0.4cm, inner sep=0.8pt, scale=1]
\tikzstyle{blacknode_v1}=[fill=black, draw=black, shape=circle, minimum size=0.1cm, inner sep=0pt]
\tikzstyle{rednode}=[fill=red, draw=red, shape=circle, minimum size=0.20cm, inner sep=0pt]
\tikzstyle{bluenode}=[fill=blue, draw=blue, shape=circle, minimum size=0.20cm, inner sep=0pt]
\tikzstyle{black_bold}=[-, draw=black, line width=0.6mm]
\tikzstyle{blackedge}=[-, draw=black, fill=none, line width=0.3mm]
\tikzstyle{rededge_thick}=[-, line width=0.45mm, draw=red]
\tikzstyle{blueedge}=[-, line width=0.3mm, draw=blue]
\tikzstyle{grayedge}=[-,line width=0.2mm, draw={rgb,255: red,64; green,64; blue,64}]
\theoremstyle{plain}
\newtheorem{thm}{Theorem}[section]
\newtheorem{prop}{Proposition}[section]
\newtheorem{lem}{Lemma}[section]
\newtheorem{prob}{Problem}[section]
\newtheorem{remark}{Remark}[section]
\title{\bfseries A note on optimal 2-planar graphs}
\author{
Licheng Zhang\textsuperscript{a}
\and
Yuanqiu Huang\textsuperscript{a}\thanks{Corresponding author.}
\and
Zhangdong Ouyang\textsuperscript{b}
}
\date{\small
\textsuperscript{a}School of Mathematics, Hunan Normal University, Changsha 410082, P.R. China.\\
\textsuperscript{b}Department of Mathematics, Hunan First Normal University, Changsha 410205, P.R. China.\\
E-mail addresses: \texttt{lczhangmath@163.com}, \texttt{hyqq@hunnu.edu.cn}, \texttt{oymath@163.com}.
}
\begin{document}

\maketitle

\begin{abstract}
In this note, we prove that every \(4\)-connected optimal \(2\)-planar graph is Hamiltonian-connected.
Furthermore, we show that the 4-connectedness condition is sharp by constructing infinitely many \(3\)-connected optimal \(2\)-planar graphs that are non-Hamiltonian.
\end{abstract}

\noindent\textbf{Keywords:}  Optimal \(2\)-planar graph, Hamiltonicity

\medskip

\section{Introduction}\label{introsec}
All graphs considered are simple. 
A graph is \emph{\(k\)-planar} if it admits a drawing in the plane in which each edge is crossed at most \(k\) times; the case \(k=2\) gives the class of \emph{\(2\)-planar graphs}.
By a result of Pach and T\'{o}th~\cite{MR1606052}, every \(2\)-planar graph on \(n\ge 3\) vertices has at most \(5n-10\) edges.
A \(2\)-planar graph on \(n\) vertices is called \emph{optimal} if it has exactly \(5n-10\) edges.
 Several structural and algorithmic aspects of optimal \(2\)-planar graphs have been investigated, such as edge decompositions \cite{MR3895013,MR3685688} and recognition algorithms \cite{MR4424763}.
 
Recall that a graph is \emph{Hamiltonian} if it contains a cycle through all vertices, and is \emph{Hamiltonian-connected} if every two distinct vertices are the endvertices of a Hamiltonian path.
Whitney proved that every \(4\)-connected triangulation is Hamiltonian~\cite{MR1503003}, and Tutte extended this to all \(4\)-connected planar graphs~\cite{MR0081471}. 
These results naturally lead to the question of whether sufficiently high connectivity forces Hamiltonicity in \(k\)-planar graph classes. 
Some results are already known for special \(1\)-planar graphs. For example, Hud\'{a}k, Madaras and Suzuki~\cite{MR2993519} proved that every optimal \(1\)-planar graph is Hamiltonian; Noguchi and Suzuki~\cite{MR3417207} later gave another proof. 
The purpose of this note is to establish a corresponding result for optimal \(2\)-planar graphs.

\begin{thm}\label{thm:main}
Every \(4\)-connected optimal \(2\)-planar graph is Hamiltonian-connected. 
Moreover, there exist infinitely many \(3\)-connected optimal \(2\)-planar graphs that are non-Hamiltonian.
\end{thm}
The positive part is proved by reducing the problem to a planar Hamiltonicity theorem of Thomassen. 
Starting from a suitable drawing of \(G\), we augment its planar skeleton by adding one vertex inside each face, obtaining a triangulation. 
We show that this triangulation is \(4\)-connected, apply the Hamiltonian-connectedness theorem for \(4\)-connected triangulations, and then shortcut the added vertices to get Hamiltonian paths in \(G\). 
For the sharpness part, we construct a family of \(3\)-connected examples by inserting a fixed pentagonal patch into the faces of arbitrary triangulations; after adding pentagrams, the resulting optimal \(2\)-planar graphs fail the standard component condition for Hamiltonicity.


\section{Face-stellation of a  3-connected plane graph}
In this section, we give  some basic properties of face-stellations, which is a key auxiliary graph for proving Theorem \ref{thm:main}.
The \emph{face-stellation} of a 2-connected plane graph \(G\), denoted by \(G^{\vertexFiveStar}\), is obtained by adding one new vertex inside each face and joining it to all vertices on the boundary of that face. 
Since every face boundary of a \(2\)-connected plane graph is a cycle, \(G^{\vertexFiveStar}\) is a triangulation. 
We call the added vertices \emph{stellating vertices} and the vertices of \(G\) \emph{original vertices}. 
Figure~\ref{fig:face-stellation} shows the face-stellation of the dodecahedral graph.

\begin{figure}[H]
\centering
\begin{tikzpicture}[scale=0.2, bezier bounding box]

\begin{pgfonlayer}{nodelayer}
    \node [style=whitenode] (1) at (6.27737, 2.03974) {};
    \node [style=whitenode] (2) at (-4.59854, -1.4923) {};
    \node [style=whitenode] (3) at (0, 6.60178) {};
    \node [style=whitenode] (4) at (3.88078, -5.34063) {};
    \node [style=whitenode] (5) at (-6.27737, 2.03974) {};
    \node [style=whitenode] (6) at (-3.88078, -5.34063) {};
    \node [style=whitenode] (7) at (0, 10) {};
    \node [style=whitenode] (8) at (5.87591, -8.09002) {};
    \node [style=whitenode] (9) at (1.22466, 1.68694) {};
    \node [style=whitenode] (10) at (1.98297, -0.644769) {};
    \node [style=whitenode] (11) at (-9.50933, 3.09002) {};
    \node [style=whitenode] (12) at (-5.87997, -8.09002) {};
    \node [style=whitenode] (13) at (-1.98297, -0.644769) {};
    \node [style=whitenode] (14) at (2.83861, 3.91322) {};
    \node [style=whitenode] (15) at (4.59448, -1.49635) {};
    \node [style=whitenode] (16) at (9.50933, 3.09002) {};
    \node [style=whitenode] (17) at (-1.22466, 1.68694) {};
    \node [style=whitenode] (18) at (0, -2.08435) {};
    \node [style=whitenode] (19) at (-2.83861, 3.91322) {};
    \node [style=whitenode] (20) at (0, -4.83374) {};

    \node [style=bluenode] (h1)  at (3.38, 1.10) {};
    \node [style=bluenode] (h2)  at (3.73, 5.13) {};
    \node [style=bluenode] (h3)  at (6.03, -1.96) {};
    \node [style=bluenode] (h4)  at (-6.03, -1.96) {};
    \node [style=bluenode] (h5)  at (-3.38, 1.10) {};
    \node [style=bluenode] (h6)  at (-2.09, -2.88) {};
    \node [style=bluenode] (h7)  at (-3.73, 5.13) {};
    \node [style=bluenode] (h8)  at (0, 3.56) {};
    \node [style=bluenode] (h9)  at (0, -6.34) {};
    \node [style=bluenode] (h10) at (2.09, -2.88) {};
    \node [style=bluenode] (h11) at (0, 13.0) {};
    \node [style=bluenode] (h12) at (0, 0) {};
\end{pgfonlayer}

\begin{pgfonlayer}{edgelayer}
    \foreach \v in {1,14,9,10,15}
        \draw [style=blueedge,densely dashed] (h1) to (\v);
    \foreach \v in {14,1,16,7,3}
        \draw [style=blueedge,densely dashed] (h2) to (\v);
    \foreach \v in {1,15,4,8,16}
        \draw [style=blueedge,densely dashed] (h3) to (\v);
    \foreach \v in {2,5,11,12,6}
        \draw [style=blueedge,densely dashed] (h4) to (\v);
    \foreach \v in {5,2,13,17,19}
        \draw [style=blueedge,densely dashed] (h5) to (\v);
    \foreach \v in {2,6,20,18,13}
        \draw [style=blueedge,densely dashed] (h6) to (\v);
    \foreach \v in {3,7,11,5,19}
        \draw [style=blueedge,densely dashed] (h7) to (\v);
    \foreach \v in {14,3,19,17,9}
        \draw [style=blueedge,densely dashed] (h8) to (\v);
    \foreach \v in {8,4,20,6,12}
        \draw [style=blueedge,densely dashed] (h9) to (\v);
    \foreach \v in {4,15,10,18,20}
        \draw [style=blueedge,densely dashed] (h10) to (\v);
    \foreach \v in {9,10,18,13,17}
        \draw [style=blueedge,densely dashed] (h12) to (\v);

    \draw [style=blueedge,densely dashed] (h11) to (7);
    \draw [style=blueedge,densely dashed] (h11) to (11);
    \draw [style=blueedge,densely dashed] (h11) to (16);
    \draw [style=blueedge,densely dashed,out=180,in=120,looseness=1.45] (h11) to (12);
    \draw [style=blueedge,densely dashed,out=0,in=60,looseness=1.45] (h11) to (8);

    \draw [style=blackedge] (1) to (14);
    \draw [style=blackedge] (1) to (15);
    \draw [style=blackedge] (1) to (16);
    \draw [style=blackedge] (2) to (5);
    \draw [style=blackedge] (2) to (6);
    \draw [style=blackedge] (2) to (13);
    \draw [style=blackedge] (3) to (7);
    \draw [style=blackedge] (3) to (14);
    \draw [style=blackedge] (3) to (19);
    \draw [style=blackedge] (4) to (8);
    \draw [style=blackedge] (4) to (15);
    \draw [style=blackedge] (4) to (20);
    \draw [style=blackedge] (5) to (11);
    \draw [style=blackedge] (5) to (19);
    \draw [style=blackedge] (6) to (12);
    \draw [style=blackedge] (6) to (20);
    \draw [style=blackedge] (7) to (11);
    \draw [style=blackedge] (7) to (16);
    \draw [style=blackedge] (8) to (12);
    \draw [style=blackedge] (8) to (16);
    \draw [style=blackedge] (9) to (10);
    \draw [style=blackedge] (9) to (14);
    \draw [style=blackedge] (9) to (17);
    \draw [style=blackedge] (10) to (15);
    \draw [style=blackedge] (10) to (18);
    \draw [style=blackedge] (11) to (12);
    \draw [style=blackedge] (13) to (17);
    \draw [style=blackedge] (13) to (18);
    \draw [style=blackedge] (17) to (19);
    \draw [style=blackedge] (18) to (20);
\end{pgfonlayer}

\end{tikzpicture}
\caption{The face-stellation of the dodecahedral graph.}
\label{fig:face-stellation}
\end{figure}
%

A \emph{chord} of a cycle \(C\) in a graph \(G\) is an edge of \(G\) joining two non-consecutive vertices of \(C\).
\begin{lem}[Tutte, see page 113  in \cite{MR4769863}]\label{lem:tutte}
Let $G$ be a 3-connected plane graph. If $C$ is a  cycle  of $G$ that bounds a face in $G$, then $C$ has no chords.
\end{lem}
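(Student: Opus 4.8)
The plan is to argue by contradiction, turning a chord of a facial cycle into a $2$-vertex-cut and thereby violating $3$-connectedness. So suppose $C$ bounds a face $F$ of $G$ but has a chord $e = uv$, with $u$ and $v$ non-consecutive on $C$. First I would record the topological picture. The simple closed curve $C$ splits the plane into two open regions; one of them is the face $F$, which by definition contains no vertex or edge of $G$ in its interior, so the open arc representing $e$ must lie in the other region $R$. Write $C = P_1 \cup P_2$, where $P_1, P_2$ are the two $u$--$v$ arcs into which $C$ is split by $\{u,v\}$; since $u,v$ are non-consecutive, each $P_i$ contains at least one internal vertex, say $w_i$.

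Next I would invoke the Jordan curve theorem for the curves $P_i \cup e$: the arc $e$ subdivides $R$ into two regions $R_1$ and $R_2$, bounded by $P_1 \cup e$ and $P_2 \cup e$ respectively, and their closures satisfy $\overline{R}_1 \cap \overline{R}_2 = $ the closed arc $e$ (its interior together with $u$ and $v$), while $\overline{R}_1 \cup \overline{R}_2 = \overline{R}$. In particular $w_1 \in \overline{R}_1 \setminus \overline{R}_2$ and $w_2 \in \overline{R}_2 \setminus \overline{R}_1$. Now consider any $w_1$--$w_2$ path $Q$ in $G - \{u,v\}$. As a curve in the plane, $Q$ avoids $F$, hence lies in $\overline{R} = \overline{R}_1 \cup \overline{R}_2$, and it joins a point of $\overline{R}_1 \setminus \overline{R}_2$ to a point of $\overline{R}_2 \setminus \overline{R}_1$; by connectedness $Q$ must therefore meet $\overline{R}_1 \cap \overline{R}_2$, i.e.\ the closed arc $e$. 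But $G$ is a plane graph, so no edge of $Q$ crosses the edge $e$ and no edge of $Q$ passes through a vertex of $G$; consequently $Q$ can meet the closed arc $e$ only at $u$, at $v$, or by traversing the edge $e$ itself, and in each case $Q$ uses $u$ or $v$, contradicting $Q \subseteq G - \{u,v\}$. Hence $w_1$ and $w_2$ lie in different components of $G - \{u,v\}$, so $\{u,v\}$ is a $2$-vertex-cut and $\kappa(G) \le 2$, contradicting $3$-connectedness of $G$.

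I expect the main obstacle to be writing the topological step cleanly, namely justifying that a path of $G$ missing $\{u,v\}$ genuinely cannot cross from the $R_1$-side to the $R_2$-side of the chord; this rests on the Jordan curve theorem together with the basic properties of a (good) plane drawing, that distinct edges cross only transversally and at most once and that edges do not run through vertices. The remaining ingredients are purely combinatorial bookkeeping: splitting $C$ into the two arcs $P_1, P_2$, noting each has an internal vertex precisely because $u$ and $v$ are non-consecutive, and concluding that the presence of a chord forces a $2$-separation. An alternative, essentially equivalent route would be to appeal to the characterization of faces of $3$-connected plane graphs as exactly the induced non-separating cycles, but since ``no chord'' is itself half of that statement, carrying out the direct argument above seems preferable.
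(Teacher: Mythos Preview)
Your argument is correct and is essentially the standard proof: a chord of a facial cycle, together with the Jordan curve theorem, yields a $2$-vertex-cut. Note, however, that the paper does not give its own proof of this lemma; it is quoted as a known result of Tutte with a textbook reference, so there is nothing to compare against beyond observing that your proof is the classical one.
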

 For $S$ be a vertex subset of $G$, let \(G[S]\) denote the subgraph of \(G\) induced by \(S\).   
\begin{lem}\label{lem:stellating-vertex}
Let \(G\) be a \(3\)-connected plane graph, and let \(G^{\vertexFiveStar}\) be its face-stellation. 
Let \(x\) be a stellating vertex inserted into a face \(f\) of \(G\), and let \(C\) be the cycle bounding \(f\). 
Then
$
N_{G^{\vertexFiveStar}}(x)=V(C),
$
and \(G^{\vertexFiveStar}[N_{G^{\vertexFiveStar}}[x]]\) is the wheel obtained from \(C\) by adding \(x\) and joining \(x\) to every vertex of \(C\).
Consequently, if \(y,z\in V(C)\), then \(xyz\) bounds a triangular face of \(G^{\vertexFiveStar}\) when \(y\) and \(z\) are consecutive on \(C\), while \(G^{\vertexFiveStar}[\{x,y,z\}]\) is an induced path with center \(x\) when \(y\) and \(z\) are non-consecutive on \(C\).
\end{lem}

\begin{proof}
By the definition of face-stellation, \(x\) is adjacent precisely to the vertices on the boundary of \(f\). 
Since \(G\) is \(3\)-connected, the boundary \(C\) of \(f\) is an induced cycle by Lemma~\ref{lem:tutte}. 
Thus the neighbors of \(x\) induce exactly the cycle \(C\), and so \(G^{\vertexFiveStar}[N_{G^{\vertexFiveStar}}[x]]\) is a wheel with hub \(x\) and rim \(C\).

The final statement follows immediately: two consecutive vertices of \(C\) are adjacent and form a triangular face with \(x\), while two non-consecutive vertices of \(C\) are nonadjacent and hence induce only the path \(y x z\).
\end{proof}

The \emph{girth} \( g(G) \) of a graph \( G \) is the length of the shortest cycle in \( G \). If \( G \) contains no cycles, then \( g(G) = \infty \).
\begin{prop}\label{prop:star-extension-4conn}
Let \(G\) be a \(3\)-connected plane graph. If \(g(G)\ge 4\), then \(G^{\vertexFiveStar}\) is \(4\)-connected.
\end{prop}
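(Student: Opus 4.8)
\emph{Proof plan.} Since $G$ is $3$-connected it has at least four vertices and at least one face, so $G^{\vertexFiveStar}$ has at least five vertices; as $G^{\vertexFiveStar}$ is a triangulation it is automatically $3$-connected. The plan is to reduce $4$-connectivity to a statement about triangles: a planar triangulation on at least five vertices is $4$-connected if and only if it has no separating triangle, i.e.\ every triangle of it bounds a face. So it suffices to show that every triangle of $G^{\vertexFiveStar}$ is facial.

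To this end, let $xyz$ be a triangle of $G^{\vertexFiveStar}$, and argue by cases on how many of $x,y,z$ are stellating vertices. By Lemma~\ref{lem:pentagulationprop}(i) no two stellating vertices are adjacent, so $xyz$ contains at most one stellating vertex. If it contains none, then $xy,yz,zx$ are edges between initial vertices, hence edges of $G$ (the edges of $G^{\vertexFiveStar}$ not already present in $G$ are exactly those incident to a stellating vertex), so $xyz$ is a $3$-cycle of $G$, contradicting $g(G)\ge 4$. Hence exactly one of them, say $x$, is a stellating vertex; let $f$ be the face of $G$ containing $x$, bounded by a cycle $C$. Since the neighbours of $x$ in $G^{\vertexFiveStar}$ are precisely the vertices of $C$, we get $y,z\in V(C)$, while $yz\in E(G)$ as above. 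Because $C$ bounds a face of the $3$-connected plane graph $G$, Lemma~\ref{lem:tutte} says $C$ has no chord, so $y$ and $z$ must be consecutive on $C$; then Lemma~\ref{lem:detail}(i) gives that $G^{\vertexFiveStar}[\{x,y,z\}]$ bounds a triangular face. Thus every triangle of $G^{\vertexFiveStar}$ is facial, and $G^{\vertexFiveStar}$ is $4$-connected.

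I expect the only real obstacle to be the classical fact invoked in the reduction, namely that a planar triangulation with at least five vertices is $4$-connected precisely when it has no separating triangle; one either cites it or supplies the short topological argument that a minimum $3$-cut $S$ of a triangulation $T$ is the vertex set of a separating triangle — take a component $A$ of $T-S$, note that each vertex of $S$ has a neighbour in $A$ and in its complement (otherwise $T$ would have a $2$-cut), and check that the face of $T[A\cup S]$ facing the remainder of $T$ is bounded exactly by a triangle on $S$, which is then separating. Everything else follows immediately from Lemmas~\ref{lem:pentagulationprop}, \ref{lem:tutte} and~\ref{lem:detail} together with the girth hypothesis, so the case analysis itself is routine.
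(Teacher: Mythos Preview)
Your proposal is correct and follows essentially the same approach as the paper: both reduce $4$-connectivity of the triangulation $G^{\vertexFiveStar}$ to the absence of separating $3$-cycles, then use the fact that at most one vertex of a triangle can be stellating (Lemma~\ref{lem:pentagulationprop}(i)) together with the girth hypothesis to rule out the all-initial case, and Lemma~\ref{lem:detail} (or equivalently Lemma~\ref{lem:tutte}) to show the remaining triangles are facial. The only cosmetic difference is that the paper argues by contradiction from a hypothetical separating $3$-cycle, whereas you phrase it as ``every triangle is facial''; the content is identical.
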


\begin{proof}
Clearly, \(G^{\vertexFiveStar}\) is a triangulation. 
It is well known that a triangulation is \(4\)-connected if and only if it has no separating \(3\)-cycle~\cite{MR0676717}. 
Suppose, to the contrary, that \(G^{\vertexFiveStar}\) has a separating \(3\)-cycle \(T=xyzx\).

By Lemma~\ref{lem:stellating-vertex}, \(T\) contains at most one stellating vertex. 
If \(T\) contains a stellating vertex, then Lemma~\ref{lem:stellating-vertex} implies that \(T\) bounds a triangular face of \(G^{\vertexFiveStar}\), contradicting that \(T\) is separating. 
Hence \(x,y,z\) are all vertices of \(G\). 
But then \(xyzx\) is a \(3\)-cycle of \(G\), contradicting \(g(G)\ge 4\). 
Therefore \(G^{\vertexFiveStar}\) has no separating \(3\)-cycle, and so it is \(4\)-connected.
\end{proof}


\section{Proof of Theorem \ref{thm:main}}

Let \(D\) be a \(2\)-planar drawing of a graph \(G\).
By deleting all crossed edges from \(D\), we obtain a plane spanning subgraph of \(G\), called the \emph{planar skeleton} of \(D\) and denoted by \(P(D)\).
A plane graph is called a \emph{pentagulation} if every face is bounded by a cycle of length five.

\begin{lem}\label{mainlem}
Let $G$ be a (simple) optimal 2-planar graph. Then $G$ has a 2-planar drawing obtained by drawing  a pentagram (that is, five mutually crossed edges) in the interior of each face of a 3-connected pentagulation.
\end{lem}

 Now, we define such a drawing $D$ of an optimal 2-planar graph \(G\),  which is obtained by inserting a pentagram inside each face of a 3-connected pentagulation, as the \emph{op-drawing} of \(G\).  For example, Figure~\ref{fig1} illustrates an optimal $2$-planar graph on $20$ vertices. 
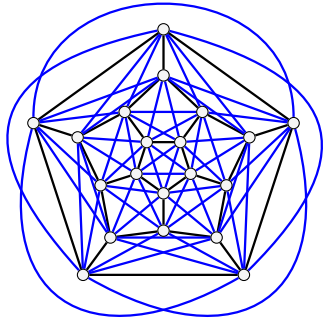
\begin{figure}[H]
\centering
\begin{tikzpicture}[scale=0.18, bezier bounding box]

	\begin{pgfonlayer}{nodelayer}
		\node [style=whitenode] (1) at (6.27737, 2.03974) {};
		\node [style=whitenode] (2) at (-4.59854, -1.4923) {};
		\node [style=whitenode] (3) at (0, 6.60178) {};
		\node [style=whitenode] (4) at (3.88078, -5.34063) {};
		\node [style=whitenode] (5) at (-6.27737, 2.03974) {};
		\node [style=whitenode] (6) at (-3.88078, -5.34063) {};
		\node [style=whitenode] (7) at (0, 10) {};
		\node [style=whitenode] (8) at (5.87591, -8.09002) {};
		\node [style=whitenode] (9) at (1.22466, 1.68694) {};
		\node [style=whitenode] (10) at (1.98297, -0.644769) {};
		\node [style=whitenode] (11) at (-9.50933, 3.09002) {};
		\node [style=whitenode] (12) at (-5.87997, -8.09002) {};
		\node [style=whitenode] (13) at (-1.98297, -0.644769) {};
		\node [style=whitenode] (14) at (2.83861, 3.91322) {};
		\node [style=whitenode] (15) at (4.59448, -1.49635) {};
		\node [style=whitenode] (16) at (9.50933, 3.09002) {};
		\node [style=whitenode] (17) at (-1.22466, 1.68694) {};
		\node [style=whitenode] (18) at (0, -2.08435) {};
		\node [style=whitenode] (19) at (-2.83861, 3.91322) {};
		\node [style=whitenode] (20) at (0, -4.83374) {};
	\end{pgfonlayer}
	\begin{pgfonlayer}{edgelayer}
		\draw [style=blackedge] (1) to (14);
		\draw [style=blackedge] (1) to (15);
		\draw [style=blackedge] (1) to (16);
		\draw [style=blackedge] (2) to (5);
		\draw [style=blackedge] (2) to (6);
		\draw [style=blackedge] (2) to (13);
		\draw [style=blackedge] (3) to (7);
		\draw [style=blackedge] (3) to (14);
		\draw [style=blackedge] (3) to (19);
		\draw [style=blackedge] (4) to (8);
		\draw [style=blackedge] (4) to (15);
		\draw [style=blackedge] (4) to (20);
		\draw [style=blackedge] (5) to (11);
		\draw [style=blackedge] (5) to (19);
		\draw [style=blackedge] (6) to (12);
		\draw [style=blackedge] (6) to (20);
		\draw [style=blackedge] (7) to (11);
		\draw [style=blackedge] (7) to (16);
		\draw [style=blackedge] (8) to (12);
		\draw [style=blackedge] (8) to (16);
		\draw [style=blackedge] (9) to (10);
		\draw [style=blackedge] (9) to (14);
		\draw [style=blackedge] (9) to (17);
		\draw [style=blackedge] (10) to (15);
		\draw [style=blackedge] (10) to (18);
		\draw [style=blackedge] (11) to (12);
		\draw [style=blackedge] (13) to (17);
		\draw [style=blackedge] (13) to (18);
		\draw [style=blackedge] (17) to (19);
		\draw [style=blackedge] (18) to (20);
		\draw [style=blueedge] (19) to (14);
		\draw [style=blueedge] (14) to (17);
		\draw [style=blueedge] (17) to (3);
		\draw [style=blueedge] (3) to (9);
		\draw [style=blueedge] (9) to (19);
		\draw [style=blueedge] (19) to (13);
		\draw [style=blueedge] (13) to (5);
		\draw [style=blueedge] (5) to (17);
		\draw [style=blueedge] (17) to (2);
		\draw [style=blueedge] (2) to (20);
		\draw [style=blueedge] (18) to (6);
		\draw [style=blueedge] (6) to (13);
		\draw [style=blueedge] (20) to (13);
		\draw [style=blueedge] (2) to (18);
		\draw [style=blueedge] (13) to (9);
		\draw [style=blueedge] (9) to (18);
		\draw [style=blueedge] (18) to (17);
		\draw [style=blueedge] (17) to (10);
		\draw [style=blueedge] (10) to (13);
		\draw [style=blueedge] (9) to (1);
		\draw [style=blueedge] (1) to (10);
		\draw [style=blueedge] (10) to (14);
		\draw [style=blueedge] (14) to (15);
		\draw [style=blueedge] (15) to (9);
		\draw [style=blueedge] (18) to (4);
		\draw [style=blueedge] (4) to (10);
		\draw [style=blueedge] (10) to (20);
		\draw [style=blueedge] (20) to (15);
		\draw [style=blueedge] (15) to (18);
		\draw [style=blueedge] (12) to (20);
		\draw [style=blueedge] (20) to (8);
		\draw [style=blueedge] (8) to (6);
		\draw [style=blueedge] (6) to (4);
		\draw [style=blueedge] (4) to (12);
		\draw [style=blueedge] (12) to (5);
		\draw [style=blueedge] (5) to (6);
		\draw [style=blueedge] (2) to (12);
		\draw [style=blueedge] (6) to (11);
		\draw [style=blueedge] (11) to (2);
		\draw [style=blueedge] (11) to (3);
		\draw [style=blueedge] (7) to (19);
		\draw [style=blueedge] (19) to (11);
		\draw [style=blueedge] (5) to (7);
		\draw [style=blueedge] (7) to (1);
		\draw [style=blueedge] (1) to (3);
		\draw [style=blueedge] (3) to (16);
		\draw [style=blueedge] (16) to (14);
		\draw [style=blueedge] (14) to (7);
		\draw [style=blueedge] (1) to (8);
		\draw [style=blueedge] (8) to (15);
		\draw [style=blueedge] (4) to (16);
		\draw [style=blueedge] (16) to (15);
		\draw [style=blueedge] (1) to (4);
		\draw [style=blueedge, bend right=60, looseness=1.75] (7) to (12);
		\draw [style=blueedge, in=-75, out=-30, looseness=1.75] (12) to (16);
		\draw [style=blueedge, in=615, out=-150, looseness=1.75] (8) to (11);
		\draw [style=blueedge, in=-15, out=45, looseness=1.75] (8) to (7);
		\draw [style=blueedge, in=90, out=90, looseness=1.50] (11) to (16);
		\draw [style=blueedge] (19) to (2);
		\draw [style=blueedge] (5) to (3);
	\end{pgfonlayer}
\end{tikzpicture}

\caption{An optimal 2-planar graph of order 20}
\label{fig1}
\end{figure}

\begin{remark}\label{rem:optimal}
In an op-drawing of an optimal \(2\)-planar graph, the boundary vertices of each pentagonal face of the planar skeleton induce a \(K_5\).
\end{remark}

\begin{lem}\label{lem:vertexFiveStar4-connected}
Let \(G\) be an optimal \(2\)-planar graph with an op-drawing \(D\). 
If \(\kappa(G)\ge 4\), then \(P(D)^{\vertexFiveStar}\) is \(4\)-connected.
\end{lem}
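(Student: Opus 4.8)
The plan is to derive the lemma from Proposition~\ref{prop:star-extension-4conn}. The first thing to record is that the planar skeleton $P(D)$ of an op-drawing is exactly the $3$-connected pentagulation underlying $G$ in the sense of Lemma~\ref{lem:opt2planar-structure}: by the definition of an op-drawing the crossed edges of $D$ are precisely the pentagram edges inserted in the interiors of the pentagonal faces (each of these is crossed, in fact twice), whereas every edge of the pentagulation lies on its plane embedding and is uncrossed. Hence $P(D)$ is a $3$-connected plane graph every face of which is bounded by a $5$-cycle, and Proposition~\ref{prop:star-extension-4conn} will give that $P(D)^{\vertexFiveStar}$ is $4$-connected as soon as we know $g(P(D)) \ge 4$. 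Since a pentagulation contains $5$-cycles, it suffices to show that $P(D)$ has no triangle, and this is the only step where the hypothesis $\kappa(G)\ge 4$ is used.

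So suppose, for contradiction, that $P(D)$ contains a triangle $T$ on vertices $x,y,z$. Since every face of $P(D)$ is bounded by a $5$-cycle, $T$ bounds no face; and since $T$ is a $3$-cycle it has no chord, so if one of the two regions into which $T$ divides the plane contained no vertex of $P(D)$, that region would be a single face bounded by the $3$-cycle $T$ --- impossible. Hence $T$ is a separating triangle, with vertices of $P(D)$ both strictly inside and strictly outside it. I then claim that $\{x,y,z\}$ is a $3$-vertex-cut of $G$. Every edge of $G$ other than $xy$, $yz$, $zx$ is either an edge of $P(D)$ or a pentagram edge lying in the interior of a single face $f$ of $P(D)$; in the first case the edge cannot join a vertex strictly inside $T$ to one strictly outside $T$ because $T$ is a cycle of the plane graph $P(D)$ and no edge of $P(D)$ crosses it, and in the second case it cannot either, since $f$ is a connected region disjoint from $T$ and therefore lies entirely on one side of $T$, whence every vertex on the boundary of $f$ lies on $T$ or on that side. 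Consequently $G - \{x,y,z\}$ has no edge joining its (nonempty) set of vertices strictly inside $T$ to its (nonempty) set of vertices strictly outside $T$, so it is disconnected, contradicting $\kappa(G) \ge 4$. This shows $g(P(D)) \ge 4$, and then $P(D)^{\vertexFiveStar}$ is $4$-connected by Proposition~\ref{prop:star-extension-4conn}.

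The real content is the bookkeeping in the second paragraph: making precise that a pentagram, being confined to the interior of one pentagonal face, cannot link the two sides of a separating triangle of $P(D)$, which is exactly what promotes such a triangle to a $3$-cut of all of $G$. Two smaller points deserve a little care: the clean identification of $P(D)$ with the underlying pentagulation (so that Proposition~\ref{prop:star-extension-4conn} applies to it), and the fact that $g(P(D))$ could be $4$ rather than $5$ if $P(D)$ has a non-facial $4$-cycle --- but this is harmless, as Proposition~\ref{prop:star-extension-4conn} only requires girth at least $4$.
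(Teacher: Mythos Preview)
Your proof is correct and follows the same route as the paper: identify $P(D)$ with the $3$-connected pentagulation of Lemma~\ref{lem:opt2planar-structure}, show $g(P(D))\ge 4$ by arguing that a triangle in $P(D)$ would be separating and would yield a $3$-vertex-cut of $G$, and then invoke Proposition~\ref{prop:star-extension-4conn}. The paper's version is much terser---it compresses your second paragraph into a single parenthetical remark that the edges of a separating $3$-cycle of $P(D)$ are uncrossed in $D$---but the logical structure is identical, and your expanded treatment of why a pentagram edge cannot cross $T$ is a genuine clarification rather than a different idea.
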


\begin{proof}
By Lemma~\ref{mainlem}, \(P(D)\) is a \(3\)-connected pentagulation. 
We claim that \(P(D)\) has no separating \(3\)-cycle. 
Indeed, such a cycle would give a \(3\)-vertex-cut of \(G\), since its three uncrossed edges separate the vertices inside from those outside. 
Thus \(g(P(D))\ge 4\). 
The result now follows from Proposition~\ref{prop:star-extension-4conn}.
\end{proof}

\begin{lem}[Thomassen \cite{MR0698698}]\label{lem:thomassen-planar-4conn}
Let $G$ be a planar graph. If $\kappa(G) \ge 4$, then $G$ is Hamiltonian-connected.
\end{lem}

\begin{proof}[Proof of Theorem~\ref{thm:main}]
First assume that \(G\) is \(4\)-connected, and let \(D\) be an op-drawing of \(G\).
By Lemma~\ref{lem:vertexFiveStar4-connected}, \(P(D)^{\vertexFiveStar}\) is \(4\)-connected. 
Hence, by Lemma~\ref{lem:thomassen-planar-4conn}, \(P(D)^{\vertexFiveStar}\) is Hamiltonian-connected.
That is, for any two vertices  \(x,y\in V(G)\), 
there is a Hamiltonian path \(L\) in \(P(D)^{\vertexFiveStar}\) with endvertices \(x\) and \(y\).
For each stellating vertex \(h\) on \(L\), let \(a\) and \(b\) be its two neighbors on \(L\). 
By the definition of face-stellation, \(a\) and \(b\) lie on the boundary of the same pentagonal face of \(P(D)\). 
By Remark~\ref{rem:optimal}, \(ab\in E(G)\). 
Replacing every subpath \(ahb\) of \(L\) by the edge \(ab\), we obtain a Hamiltonian path between $x$ and $y$ in \(G\). 
Thus \(G\) is Hamiltonian-connected.

We now construct the promised \(3\)-connected non-Hamiltonian examples. The construction is described in four steps.

\begin{enumerate}
\item Let \(H\) be the plane graph shown in Figure~\ref{gadget}. 
From the drawing, \(H\) is \(3\)-connected, \(H-\{x,y,z\}\) is connected, and all faces of \(H\) except the outer face \(xyzx\) are pentagons.

\item Let \(H'\) be a simple triangulation on \(\ell\ge 5\) vertices. 
Then \(H'\) is \(3\)-connected and has \(2\ell-4\) faces. 
Replace each triangular face of \(H'\) by a copy of \(H\), identifying \(xyzx\) with the boundary triangle of the face. 
Let \(H''\) be the resulting plane graph. 
Then \(H''\) is a pentagulation with \(\kappa(H'')=3\).

\item Insert a pentagram inside each pentagonal face of \(H''\). 
This gives an optimal \(2\)-planar graph \(G\). 
Since \(H''\) is a spanning subgraph of \(G\), \(\kappa(G)\ge3\); and the separating triangles inherited from \(H'\) remain \(3\)-vertex-cuts in \(G\). 
Thus \(\kappa(G)=3\).

\item Finally, set \(S=V(H')\). 
Then \(G-S\) has \(2\ell-4\) connected components, while \(|S|=\ell\). 
This contradicts the standard necessary condition for Hamiltonian graphs that
\(c(G-S)\le |S|\) for every vertex set \(S\), where \(c(G-S)\) denotes the number of connected components of \(G-S\); see Bondy and Murty~\cite[Theorem~18.1]{MR2368647}.
Hence \(G\) is non-Hamiltonian.
\end{enumerate}

Since \(\ell\) is arbitrary, this gives infinitely many \(3\)-connected optimal \(2\)-planar graphs that are non-Hamiltonian.
\end{proof}

\vspace{10pt}  

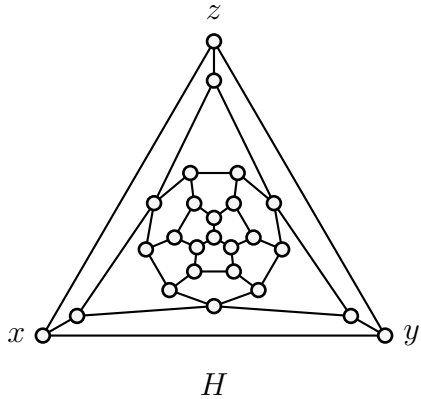
\begin{figure}[H]
\centering

\begin{tikzpicture}[scale=1.3,
  xnode/.style={black,fill=xgreen,line width=1pt},
  ynode/.style={black,fill=xwhite,line width=1pt},
  znode/.style={black,fill=xcrimson,line width=1pt},
  xline/.style={xback,line width=0.8pt},
  yline/.style={xmoccasin,line width=0.8pt}]
  \def\a{20mm}      
\def\r{0.7mm}     

  \begin{scope}
    \coordinate (v) at (0,0);
    \foreach \i in {1,2,3}{\coordinate (x\i) at (-30+120*\i:\a);}
    \foreach \i in {1,2,3}{\coordinate (y\i) at (-30+120*\i:\a*0.8);}
    \foreach \i in {1,...,9}{\coordinate (z\i) at (-50+40*\i:\a*0.35);}
    \foreach \i in {1,...,6}{\coordinate (u\i) at (-60+60*\i:\a*0.2);}
    \foreach \i in {1,2,3}{\coordinate (v\i) at (-30+120*\i:\a*0.1);}

    \draw[xline] (x1) -- (x2) -- (x3) -- (x1);
    \foreach \i in {1,2,3}{\draw[xline] (x\i) -- (y\i);}
    \foreach \i [remember=\i as \j (initially 9)] in {1,2,...,9}{\draw[xline] (z\i) -- (z\j);}
    \draw[xline] (y1) -- (z2) (y2) -- (z5) (y3) -- (z8);
    \draw[xline] (y1) -- (z5) (y2) -- (z8) (y3) -- (z2);

    \draw[xline] (u1) -- (z1) (u2) -- (z3) (u3) -- (z4) (u4) -- (z6) (u5) -- (z7) (u6) -- (z9);
    \draw[xline] (u1) -- (u2) (u3) -- (u4) (u5) -- (u6);
    \draw[xline] (v1) -- (u2) (v1) -- (u3) (v2) -- (u4) (v2) -- (u5) (v3) -- (u6) (v3) -- (u1);

    \draw[xline] (v) -- (v1) (v) -- (v2) (v) -- (v3);

    \draw[ynode] (v) circle (\r);
    \foreach \i in {1,...,9}{\draw[ynode] (z\i) circle (\r);}
    \foreach \i in {1,...,6}{\draw[ynode] (u\i) circle (\r);}
    \foreach \i in {1,2,3}{
      \draw[ynode] (x\i) circle (\r);
      \draw[ynode] (y\i) circle (\r);
      \draw[ynode] (v\i) circle (\r);
    }

  \end{scope}
  \node  at (0,-1.5) {$H$}; 
  \node  at (-2,-1) {$x$}; 
  \node  at (2,-1) {$y$}; 
  \node  at (0,2.3) {$z$}; 
\end{tikzpicture}

\caption{A plane graph $H$ where all faces except the outer triangular face are pentagons}
\label{gadget}
\end{figure}

\begin{remark}
The graph $H$ was constructed by Parcly Taxel (Jeremy Tan Jie Rui) in response to a question of Prof. Kabenyuk \cite{Kabenyuk:2023:pentagons,Kabenyuk:2024:MinimalPentagulations}.
\end{remark}
\section{Further problems}

We close with a few questions. 
By Theorem~\ref{thm:main}, a non-Hamiltonian optimal \(2\)-planar graph must have connectivity exactly \(3\). 
Our construction has at least \(115\) vertices, so the following extremal question remains open.

\begin{prob}
Determine the minimum order of a non-Hamiltonian optimal \(2\)-planar graph.
\end{prob}

The optimality assumption is also essential in its present form. 
Indeed, Theorem~\ref{thm:main} does not extend to drawing-saturated \(2\)-plane graphs: \(K_{4,6}\) has a saturated \(2\)-planar drawing, but it is \(4\)-connected and non-Hamiltonian; see Figure~\ref{K46}. 
\begin{figure} \centering \begin{tikzpicture}[scale=0.8, bezier bounding box] \begin{pgfonlayer}{nodelayer} \node [style=whitenode] (0) at (3.75, 0.006) {}; \node [style=whitenode] (1) at (-0.005, -1.9877) {}; \node [style=bluenode] (2) at (-1.042, -1.027) {}; \node [style=bluenode] (3) at (-3.55, 0.018) {}; \node [style=bluenode] (4) at (0.982, -0.979) {}; \node [style=bluenode] (5) at (-0.022, 0.47) {}; \node [style=bluenode] (6) at (-0.03, 2.094) {}; \node [style=bluenode] (7) at (2.139, -0.016) {}; \node [style=whitenode] (8) at (-1.06, -0.041) {}; \node [style=whitenode] (9) at (0.984, 0.008) {}; \node [style=none] (10) at (-2.75, 1.25) {}; \end{pgfonlayer} \begin{pgfonlayer}{edgelayer} \draw [style=blackedge, in=90, out=90, looseness=1.50] (0) to (3); \draw [style=blackedge, bend right] (0) to (6); \draw [style=blackedge] (0) to (7); \draw [style=blackedge, in=-75, out=-165] (1) to (3); \draw [style=blackedge] (2) to (1); \draw [style=blackedge] (2) to (8); \draw [style=blackedge] (4) to (1); \draw [style=blackedge] (8) to (5); \draw [style=blackedge, bend left=15] (8) to (6); \draw [style=blackedge] (9) to (4); \draw [style=blackedge] (9) to (5); \draw [style=blackedge] (9) to (7); \draw [style=blackedge,bend right=45] (4) to (0); \draw [style=blackedge, bend right] (1) to (7); \draw [style=blackedge, in=-90, out=-30, looseness=2.25] (9) to (3); \draw [style=blackedge, bend right=75, looseness=2.00] (6) to (1); \draw [style=blackedge](8) to (3); \draw [style=blackedge,in=420, out=105] (0) to (10.center); \draw [style=blackedge,in=-120, out=195, looseness=1.25] (2) to (10.center); \draw [style=blackedge,bend right](4) to (8); \draw [style=blackedge,bend right, looseness=1.25] (2) to (9); \draw [style=blackedge,bend right] (5) to (1); \draw [style=blackedge,bend right=45] (0) to (5); \draw [style=blackedge,bend right=15] (9) to (6); \draw [style=blackedge,bend left=60, looseness=1.50] (8) to (7); \end{pgfonlayer} \end{tikzpicture} \caption{A saturated 2-planar drawing of $K_{4,6}$} \label{K46} \end{figure}
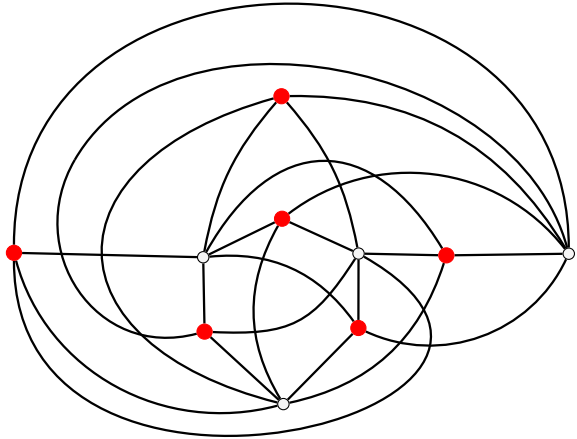
It is therefore natural to ask whether the result remains true for \(4\)-connected maximal \(2\)-planar graph Hamiltonian. A \(2\)-planar graph \(G\) is \emph{maximal \(2\)-planar} if, for every pair of non-adjacent vertices \(u,v\), the graph \(G+uv\) is not \(2\)-planar. 
This should be distinguished from drawing-saturation, which refers to a fixed \(2\)-planar drawing.

\begin{prob}
Is every \(4\)-connected maximal \(2\)-planar graph Hamiltonian?
\end{prob}

More generally, we propose the following problem.
\begin{prob}
For \(2\)-planar graphs, what is the least integer \(\ell\), if it exists, such that every \(\ell\)-connected \(2\)-planar graph is Hamiltonian? 
More generally, does sufficiently high connectivity force Hamiltonicity in \(k\)-planar graphs?
\end{prob}

The corresponding question is already a well-known open problem for \(1\)-planar graphs: it is not known whether every \(6\)-connected \(1\)-planar graph is Hamiltonian; see, for example, Fabrici et al.~\cite{MR4130388}.

\section*{Acknowledgements}

The authors declare that they have no conflicts of interest. This research was supported by grants from the National Natural Science Foundation of China (Grant Nos. 12271157, 12371346).  No data was used for the research described in the article.

\begin{bibdiv}
\begin{biblist}

\bib{MR4769863}{book}{
   author={Ball, Simeon},
   author={Serra, Oriol},
   title={A course in combinatorics and graphs},
   series={Compact Textbooks in Mathematics},
   publisher={Birkh\"auser/Springer, Cham},
   date={[2024] \copyright 2024},
   pages={xi+172},
   isbn={978-3-031-55383-7},
   isbn={978-3-031-55384-4},
   review={\MR{4769863}},
   doi={10.1007/978-3-031-55384-4},
}

\bib{MR0676717}{article}{
   author={Baybars, Ilker},
   title={On $k$-path Hamiltonian maximal planar graphs},
   journal={Discrete Math.},
   volume={40},
   date={1982},
   number={1},
   pages={119--121},
   issn={0012-365X},
   review={\MR{0676717}},
   doi={10.1016/0012-365X(82)90193-5},
}

\bib{MR3895013}{article}{
   author={Bekos, Michael A.},
   author={Di Giacomo, Emilio},
   author={Didimo, Walter},
   author={Liotta, Giuseppe},
   author={Montecchiani, Fabrizio},
   author={Raftopoulou, Chrysanthi},
   title={Edge partitions of optimal 2-plane and 3-plane graphs},
   journal={Discrete Math.},
   volume={342},
   date={2019},
   number={4},
   pages={1038--1047},
   issn={0012-365X},
   review={\MR{3895013}},
   doi={10.1016/j.disc.2018.12.002},
}

\bib{MR3685688}{article}{
   author={Bekos, Michael A.},
   author={Kaufmann, Michael},
   author={Raftopoulou, Chrysanthi N.},
   title={On optimal 2- and 3-planar graphs},
   conference={
      title={33rd International Symposium on Computational Geometry},
   },
   book={
      series={LIPIcs. Leibniz Int. Proc. Inform.},
      volume={77},
      publisher={Schloss Dagstuhl. Leibniz-Zent. Inform., Wadern},
   },
   isbn={978-3-95977-038-5},
   date={2017},
   pages={Art. No. 16, 16},
   review={\MR{3685688}},
}

\bib{MR2368647}{book}{
   author={Bondy, J. A.},
   author={Murty, U. S. R.},
   title={Graph Theory},
   series={Graduate Texts in Mathematics},
   volume={244},
   publisher={Springer},
   address={New York},
   date={2008},
   isbn={978-1-84628-969-9},
   mrnumber={2368647},
}

\bib{MR4130388}{article}{
   author={Fabrici, Igor},
   author={Harant, Jochen},
   author={Madaras, Tom{\'a}{\v{s}}},
   author={Mohr, Samuel},
   author={Sot{\'a}k, Roman},
   author={Zamfirescu, Carol T.},
   title={Long cycles and spanning subgraphs of locally maximal \(1\)-planar graphs},
   journal={J. Graph Theory},
   volume={95},
   date={2020},
   number={1},
   pages={125--137},
   doi={10.1002/jgt.22542},
}

\bib{MR4424763}{article}{
   author={F\"orster, Henry},
   author={Kaufmann, Michael},
   author={Raftopoulou, Chrysanthi N.},
   title={Recognizing and embedding simple optimal 2-planar graphs},
   conference={
      title={Graph drawing and network visualization},
   },
   book={
      series={Lecture Notes in Comput. Sci.},
      volume={12868},
      publisher={Springer, Cham},
   },
   isbn={978-3-030-92930-5},
   isbn={978-3-030-92931-2},
   date={[2021] \copyright 2021},
   pages={87--100},
   review={\MR{4424763}},
   doi={10.1007/978-3-030-92931-2\_6},
}

\bib{MR2993519}{article}{
   author={Hud\'ak, D\'avid},
   author={Madaras, Tom\'a\v s},
   author={Suzuki, Yusuke},
   title={On properties of maximal 1-planar graphs},
   journal={Discuss. Math. Graph Theory},
   volume={32},
   date={2012},
   number={4},
   pages={737--747},
   issn={1234-3099},
   review={\MR{2993519}},
   doi={10.7151/dmgt.1639},
}

\bib{Kabenyuk:2023:pentagons}{misc}{
  author={Kabenyuk, Mikhail},
  title={The minimal partition of a triangle into pentagons},
  date={2023},
  note={Math StackExchange, Question 4644257},
  howpublished={\url{https://math.stackexchange.com/questions/4644257}},
}
\bib{Kabenyuk:2024:MinimalPentagulations}{misc}{
  author={Kabenyuk, Mikhail},
  title={Minimal pentagulations of {$n$}-gons},
  date={2024},
  note={arXiv:2412.09161v1 [math.CO], 12 Dec 2024},
  doi={10.48550/arXiv.2412.09161},
  howpublished={\url{https://arxiv.org/abs/2412.09161}},
}

\bib{MR3417207}{article}{
   author={Noguchi, Kenta},
   author={Suzuki, Yusuke},
   title={Relationship among triangulations, quadrangulations and optimal
   1-planar graphs},
   journal={Graphs Combin.},
   volume={31},
   date={2015},
   number={6},
   pages={1965--1972},
   issn={0911-0119},
   review={\MR{3417207}},
   doi={10.1007/s00373-015-1568-8},
}

\bib{MR1606052}{article}{
   author={Pach, J\'anos},
   author={T\'oth, G\'eza},
   title={Graphs drawn with few crossings per edge},
   journal={Combinatorica},
   volume={17},
   date={1997},
   number={3},
   pages={427--439},
   issn={0209-9683},
   review={\MR{1606052}},
   doi={10.1007/BF01215922},
}

\bib{MR0698698}{article}{
   author={Thomassen, Carsten},
   title={A theorem on paths in planar graphs},
   journal={J. Graph Theory},
   volume={7},
   date={1983},
   number={2},
   pages={169--176},
   issn={0364-9024},
   review={\MR{0698698}},
   doi={10.1002/jgt.3190070205},
}

\bib{MR0081471}{article}{
   author={Tutte, W. T.},
   title={A theorem on planar graphs},
   journal={Trans. Amer. Math. Soc.},
   volume={82},
   date={1956},
   pages={99--116},
   issn={0002-9947},
   review={\MR{0081471}},
   doi={10.2307/1992980},
}

\bib{MR1503003}{article}{
   author={Whitney, Hassler},
   title={A theorem on graphs},
   journal={Ann. of Math. (2)},
   volume={32},
   date={1931},
   number={2},
   pages={378--390},
   issn={0003-486X},
   review={\MR{1503003}},
   doi={10.2307/1968197},
}

\end{biblist}

\end{bibdiv}

\end{document}